\documentclass[twoside,11pt,a4paper]{article}
\oddsidemargin .1in \evensidemargin .1in \textwidth 6.5in
\topmargin -.50in \textheight 23cm
\usepackage{natbib}
\bibliographystyle{apalike}
\usepackage{amsmath,float,amssymb,mathtools,amsthm,mathrsfs,multirow,booktabs,setspace,morefloats,epsfig,caption,graphicx,inputenc,subcaption}
\usepackage{bigints}
\usepackage{xcolor}
\usepackage{placeins}
\newtheorem{lemma}{Lemma}
\newtheorem{thm}{Theorem}
\usepackage{fancyhdr}
\newcommand\shorttitle{U-statistic based on overlapping sample spacings}

\newcommand\authors{Rahul Singh and Neeraj Misra}

\fancyhf{}

\fancyhead[CE]{\small\slshape\shorttitle}
\fancyhead[CO]{\small\slshape\authors}
\pagestyle{fancy}
\fancyfoot[C]{\thepage}
\title{U-statistic based on overlapping sample spacings}

\author{Rahul Singh$^1$ and Neeraj Misra$^2$}
\date{
	\small{Department of Mathematics and Statistics, Indian Institute of Technology Kanpur, India}\\%
}

\begin{document}
	\maketitle
	{\let\thefootnote\relax\footnote{{ Email: $^1$sirahul@iitk.ac.in,  $^2$neeraj@iitk.ac.in}}}
	
	\begin{abstract}
		For testing goodness of fit, we consider a class of U-statistics of overlapping spacings of order two, and investigate their asymptotic properties. The standard U-statistic theory is not directly applicable here as the overlapping spacings form a dependent random sequence. The asymptotic distribution of the statistics under the null hypothesis and under a sequence of local alternatives are derived. In terms of the Pitman ARE, the U-statistic based on Gini's mean square difference of overlapping spacings is found to be the asymptotically locally most powerful. Interestingly, this test has the same efficacy as the Greenwood test based on overlapping spacings.\\~\\
		\textbf{Keywords: }ARE, Goodness of fit test, Overlapping spacings, Gini's mean difference\\
		\textbf{MSC: }62G10,  62G20
	\end{abstract}

\section{Introduction}
The Goodness of Fit (GoF) of a statistical model describes how well it fits to the given set of observations.  Measures of GoF typically summarize the discrepancy between observed sample values and expected hypothesized model. The widely studied goodness of fit (GoF) problem is to test whether a given set of observations come from a specified distribution. Let $X_1,\ldots,X_{n-1}$ be independently and identically distributed (iid) random variables having an absolutely continuous distribution $F$ on the real line. There exists abundant literature on the problem of testing simple GoF, i.e., for testing
\begin{align}\label{gof}
H_0:F=F_0\text{ against }H_A:F\neq F_0,
\end{align}
where $F_0$ is a completely specified absolutely continuous distribution function.
Using the probability integral transform, without loss of generality, we can assume that $F$ is supported on $[0,1]$ and the testing problem (\ref{gof}) is equivalent to testing:
\begin{align}
H_0:F(x)=x,\ \forall x\in[0,1]\text{ against }H_A:F(x)\neq x \text{ for some }x\in[0,1].
\end{align}
Let $X_{1:n}\! \leq \!X_{2:n}\! \leq \cdots \leq \! X_{n-1:n}$ denote the order statistics corresponding to the random sample $X_1,\ldots,X_{n-1}$, and let $X_{0:n}=0$ and $X_{n:n}=1$. For any positive integer $m$ ($\leq n/2$), the overlapping $m$-spacings are defined as:
\begin{align}
D_{j,n}^{(m)}=
X_{j+m-1:n}-X_{j-1:n},\ &j=1,2,\ldots, n-m+1.
\end{align}
Under $H_0$, we denote observations $X_i$'s by $U_i$'s and overlapping $m$-spacings by
\begin{align}
T_{j,n}^{(m)}=
U_{j+m-1:n}-U_{j-1:n},\ &j=1,2,\ldots, n-m+1.
\end{align}
Let $h:[0,\infty)\times[0,\infty)\to\mathbb{R}$ be a function that is symmetric in its arguments (i.e., $h(x,y)=h(y,x),~\forall (x,y)\in[0,\infty)\times[0,\infty$)). Consider statistics of the form
\begin{align}\label{two}
W_{m,n}(h)=\frac{2}{(n-m+1)(n-m)}\sum_{1\leq i<j\leq n-m+1}h\left(nD_{i,n}^{(m)},nD_{j,n}^{(m)}\right),
\end{align}
which is a second order U-statistic of the overlapping $m$-spacings. A popular example of such statistic is the generalized Gini's mean difference of overlapping $m$-spacings, i.e.,
\begin{align}
G_{m,n}(r)=\frac{2}{(n-m+1)(n-m)}\sum_{1\leq i<j\leq n-m+1}\left|nD_{i,n}^{(m)}-nD_{j,n}^{(m)}\right|^r,\hfill r>0.
\end{align}
For $r=1$, $G_{m,n}(1)$ is the Gini's mean difference  of overlapping $m$-spacings, and for $r=2$, $G_{m,n}(2)$ is the Gini's mean squared difference  of overlapping $m$-spacings.

The most widely studied statistics based on overlapping $m$-spacings have the form 
\begin{align}\label{one}
V_{m,n}(g)=\frac{1}{n}\sum_{i=1}^{n-m+1}g\left(nD_{i,n}^{(m)}\right),
\end{align}	
where $g:[0,\infty)\rightarrow\mathbb{R}$ is a function satisfying certain smoothness assumptions. This statistic can be thought as first order U-statistic of the overlapping $m$-spacings.

For tests based on simple spacings (i.e., $m=1$), \cite{rao_1970} found that the Greenwood test (one corresponding to $g(x)=x^2,~x\geq0$) is asymptotically optimal among tests based on (\ref{one}). \cite{pino_1979} showed that the Greenwood type test based on disjoint higher order spacings is asymptotically superior to classical Greenwood test based on simple spacings ($m=1$). Kuo and Rao (1981) established that, for any fixed spacings size $m$, the Greenwood type test is asymptotically optimal among tests based on statistics of type (\ref{one}). \cite{rao_1984} found that, for any fixed spacings size $m$, the Greenwood type test based on overlapping $m$-spacings is superior to the corresponding test based on disjoint $m$-spacings.
One known weakness of tests based on symmetric sum functions of spacings is that they can not detect alternatives converging to the null distribution at a  rate faster than $n^{-1/4}$.

\cite{rao_2004} studied the statistic $G_{1,n}(1)$ for GoF purpose. They obtained exact and asymptotic distribution of the statistic $G_{1,n}(1)$ under the null hypothesis. Their simulation studies suggest that the GoF test based on $G_{1,n}(1)$ has encouraging powers. 

For the special case $m=1$, second order U-statistic(s) becomes
\begin{align}\label{simple}
W_{1,n}(h)=\frac{2}{n(n-1)}\sum_{1\leq i<j\leq n}h\left(nD_{i,n}^{(1)},nD_{j,n}^{(1)}\right).
\end{align}
Asymptotic behaviour of statistics of the type (\ref{simple}) was studied by \cite{tung_2012a}. They found that the test based on $G_{1,n}(2)$ is asymptotically optimal among tests based on such statistics. This test is equivalent to the classical Greenwood test (\cite{greenwood_1946}). \cite{tung_2012b} investigated analogues of statistics of type (\ref{two}) that are based on disjoint $m$-spacings. In this case also, they found that the asymptotically optimal test based on such statistics is the one that is based on Gini's squared mean difference, and this is equivalent to the Greenwood type test based on disjoint $m$-spacings. Now, a natural question that arises is: do similar results hold true for statistics based on overlapping $m$-spacings as well? In this article, we attempt to address this question.

Rest of the paper is organized as follows. In Section 2, we derive the asymptotic null distribution of the statistic $W_{m,n}(h)$, defined in (\ref{two}), and in Section 3 we derive asymptotic distribution of $W_{m,n}(h)$ under a sequence of local alternatives that converges to the null hypothesis at a rate of $n^{-1/4}$. In Section 4, we derive the asymptotically optimal test. Finally, in Section 5, we report some finite sample simulation study.
\section{Asymptotic null distribution}
In this section, we will discuss asymptotic distribution of second order U-statistic of the overlapping $m$-spacings, under the null hypothesis. Under $H_0$, we have 
\begin{align}
W_{m,n}(h)=\frac{2}{(n-m+1)(n-m)}\sum_{1\leq i<j\leq n-m+1}h\left(nT_{i,n}^{(m)},nT_{j,n}^{(m)}\right).
\end{align}
Let $Z_1,Z_2,\ldots, Z_{n}$ be iid standard exponential random variables. 
Take $\zeta_m= Z_1+Z_2+\cdots+Z_m$, $\zeta_{j,m}=Z_{j}+Z_{j+1}+\cdots+Z_{j+m-1},~j=1,2,\ldots,n-m+1$ and $\bar{\zeta}_n=\dfrac{Z_1+Z_2+\cdots+Z_{n}}{n}$. 
Observe that $\zeta_{j,m}\ \&\ \zeta_{k,m}$ are distributed as $gamma(m,1)$ random variables, and
$\zeta_{j,m}\ \&\ \zeta_{k,m}$ are independent if and only if $|j-k|\geq m$. 
Then, we have the following representation (cf. \cite{rao_1980})
\begin{align}\label{con_repr}
\left(nT_{1,n}^{(m)},nT_{2,n}^{(m)},\ldots,nT_{n-m+1,n}^{(m)}\right)\stackrel{d}{=}&
\left(\frac{{\zeta}_{1,m}}{\bar{\zeta}_n},\frac{{\zeta}_{2,m}}{\bar{\zeta}_n},\ldots,\frac{{\zeta}_{n-m+1,m}}{\bar{\zeta}_n}\right)\nonumber\\
\stackrel{d}{=}&
\left({\zeta}_{1,m},{\zeta}_{2,m}\ldots,{\zeta}_{n-m+1,m}\big| \bar{\zeta}_n=1\right).
\end{align}

Here $\{\zeta_{j,m}\}_{j=1}^{n-m+1}$ is a $(m-1)$-dependent stationary sequence of random variables. Many authors have studied U-statistics based on $m$-dependent sequence of random variables (e.g. \cite{wang_1999}), and found that these statistics have similar asymptotic properties as U-statistics based on iid sequence of random variables. 

Define $\theta=\mathbb{E}[h(\zeta_{1,m},\zeta_{m+1,m})]$ and let 
\begin{align}
U_N=\frac{2}{N(N-1)}\sum_{1\leq i<j\leq N}h\left(\frac{{\zeta}_{i,m}}{\bar{\zeta}_n},\frac{{\zeta}_{j,m}}{\bar{\zeta}_n}\right),
\end{align}
where $N=n-m+1$. Then, $U_N$ is a second order U-statistic based on conditionally $(m-1)$-dependent sequence of random variables $\{\frac{{\zeta}_{j,m}}{\bar{\zeta}_n}\}_{j=1}^{N}$ with the kernel $h:[0,\infty)\times[0,\infty)\to\mathbb{R}$.
We require following assumptions for asymptotic results:
\begin{itemize}
\item[\textbf{A1.}] $\mathbb{E}[\left|h(\zeta_{1,m},\zeta_{j,m})\right|^3]<\infty,\ \forall j=1,2,\ldots,m+1$.
\end{itemize}
For $m=1$, denote $T_{j,n}^{(1)}$ by $T_{j,n}$. From \cite{holst_1979} we have the following result.
\begin{lemma}\label{lemma1}
Let $\psi:\mathbb{R}^M\to \mathbb{R}$ be any real measurable function. For $1\leq M\leq n-3$,
\begin{align*}
	&\mathbb{E}\big[e^{it\psi(nT_{1,n},nT_{2,n},\ldots,nT_{M,n})}\big]\\
	=&~\frac{(n-1)!}{2\pi n^{n-1}e^{-n}} 
	\int_{-\infty}^{\infty}\mathbb{E}\bigg[e^{it\psi(Z_1,Z_2,\ldots, Z_M)+iu\sum_{j=1}^{n}(Z_j-1)}\bigg]~du.
\end{align*}
\end{lemma}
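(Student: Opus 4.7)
My plan is to combine the conditional representation in \eqref{con_repr} (specialized to $m=1$) with the Fourier inversion formula. For $m=1$, \eqref{con_repr} gives
$(nT_{1,n},\ldots,nT_{n,n}) \stackrel{d}{=} (Z_1,\ldots,Z_n \mid S_n = n)$,
where $S_n := Z_1 + \cdots + Z_n \sim \Gamma(n,1)$. Since $\psi$ depends only on the first $M$ coordinates, this immediately reduces the problem to evaluating the conditional characteristic function
\[
\mathbb{E}\bigl[e^{it\psi(nT_{1,n},\ldots,nT_{M,n})}\bigr] = \mathbb{E}\bigl[e^{it\psi(Z_1,\ldots,Z_M)} \bigm| S_n = n\bigr] = \frac{g(n)}{f_{S_n}(n)},
\]
where $f_{S_n}(n) = n^{n-1}e^{-n}/(n-1)!$ by the $\Gamma(n,1)$ density, and $g$ denotes the (complex-valued) density of the signed measure $A\mapsto \mathbb{E}\bigl[e^{it\psi(Z_1,\ldots,Z_M)}\mathbf{1}\{S_n\in A\}\bigr]$ with respect to Lebesgue measure on $\mathbb{R}$.

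Next, I would compute $g(n)$ by Fourier inversion. The Fourier transform of $g$ is
\[
\tilde g(u) \;=\; \mathbb{E}\bigl[e^{it\psi(Z_1,\ldots,Z_M) + iuS_n}\bigr].
\]
Because $(Z_1,\ldots,Z_M)$ is independent of $Z_{M+1}+\cdots+Z_n \sim \Gamma(n-M,1)$, the factor $(1-iu)^{-(n-M)}$ splits off from $\tilde g(u)$ and the remaining factor has modulus at most $1$; hence $|\tilde g(u)|\leq (1+u^2)^{-(n-M)/2}$. The hypothesis $M\leq n-3$ yields $n-M\geq 3$, so $\tilde g\in L^1(\mathbb{R})$, the density $g$ is continuous on $\mathbb{R}$, and the Fourier inversion formula holds pointwise:
\[
g(n) \;=\; \frac{1}{2\pi}\int_{-\infty}^{\infty} e^{-iun}\,\tilde g(u)\,du \;=\; \frac{1}{2\pi}\int_{-\infty}^{\infty}\mathbb{E}\bigl[e^{it\psi(Z_1,\ldots,Z_M) + iu(S_n - n)}\bigr] du.
\]

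To conclude, I would substitute this expression for $g(n)$ and the explicit value of $f_{S_n}(n)$ into the ratio, and rewrite $S_n - n = \sum_{j=1}^n(Z_j-1)$; this yields the claimed identity directly. The only delicate point is the rigorous justification of pointwise Fourier inversion at $y=n$, and this is precisely what the hypothesis $M\leq n-3$ delivers through the absolute integrability of $\tilde g$ (an exchange-of-integration/Fubini step is then routine since $|e^{it\psi}|=1$). Everything else is bookkeeping with gamma densities and independence.
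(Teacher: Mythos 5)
Your proof is correct. The paper gives no proof of this lemma (it is quoted from Holst, 1979), and your argument — representing the normalized uniform spacings as i.i.d.\ standard exponentials conditioned on $S_n=n$, writing the conditional characteristic function as the ratio $g(n)/f_{S_n}(n)$, and recovering $g(n)$ by Fourier inversion, with $M\leq n-3$ guaranteeing $\lvert\tilde g(u)\rvert\leq(1+u^2)^{-(n-M)/2}\in L^1$ so that inversion holds pointwise at the continuous version of $g$ — is precisely the standard derivation underlying the cited result.
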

Observe that $\{\zeta_{j,m}\}_{j\geq1}$ is a sequence of strictly stationary  $(m-1)$-dependent random variables. Define 
\begin{align*}
U_N^\#=&~\binom{N}{2}^{-1}\sum_{1\leq i<j\leq N}\psi(\zeta_{i,m},\zeta_{j,m}),\\
\psi_1(x)=&~\mathbb{E}\pmb[\psi(\zeta_{1,m},x)\pmb]-\mathbb{E}\psi(\zeta_{1,m},\zeta_{m+1,m})
\\
\text{and }\sigma_\psi^2=&~\mathbb{E}[\psi_1^2(\zeta_{1,m})]+2\sum_{i=2}^{m}\mathbb{E}[\psi_1(\zeta_{1,m})\psi_1(\zeta_{i,m})].
\end{align*}

From \cite{wang_1999} we have the following result.
\begin{lemma}\label{lemma2}
For $\sigma_\psi^2< \infty$ and $\mathbb{E}|\psi_1(\zeta_{1,m})|^3<\infty$,
\begin{align*}
	\lim_{N\to\infty}\sup_{x}\bigg|P\left(\frac{\sqrt{N}(U_N^\#-\mathbb{E}\psi(\zeta_{1,m},\zeta_{m+1,m}))}{2\sigma_\psi}\leq x\right)-\Phi(x)\bigg|= 0,
\end{align*}
where $\Phi$ is distribution function of standard normal random variable.
\end{lemma}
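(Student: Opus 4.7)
The plan is to follow the classical Hoeffding projection for U-statistics, adapted to the $(m-1)$-dependent setting as in \cite{wang_1999}. First I would write the decomposition
\[
\psi(\zeta_{i,m},\zeta_{j,m}) - \theta = \psi_1(\zeta_{i,m}) + \psi_1(\zeta_{j,m}) + \psi_2(\zeta_{i,m},\zeta_{j,m}),
\]
where $\psi_2(x,y) := \psi(x,y) - \theta - \psi_1(x) - \psi_1(y)$, and average over $i<j$ to obtain
\[
U_N^\# - \theta = \frac{2}{N}\sum_{i=1}^N \psi_1(\zeta_{i,m}) + R_N,
\]
with $R_N = \binom{N}{2}^{-1}\sum_{1\leq i<j\leq N}\psi_2(\zeta_{i,m},\zeta_{j,m})$. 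Symmetry of $\psi$, stationarity, and independence of $\zeta_{1,m}$ and $\zeta_{m+1,m}$ give $\mathbb{E}\psi_1(\zeta_{i,m}) = 0$ for every $i$.

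Next I would handle the linear part by the Hoeffding--Robbins CLT for strictly stationary $(m-1)$-dependent sequences. The assumption $\sigma_\psi^2 < \infty$ supplies the long-run variance and $\mathbb{E}|\psi_1(\zeta_{1,m})|^3 < \infty$ provides a Lyapunov-type moment bound; together they yield
\[
\frac{\sqrt{N}}{2\sigma_\psi}\cdot\frac{2}{N}\sum_{i=1}^N \psi_1(\zeta_{i,m}) \xrightarrow{d} N(0,1).
\]

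The hard step is showing $\sqrt{N}R_N \xrightarrow{P} 0$. In the iid case this is immediate from orthogonality of the Hoeffding decomposition, but under $(m-1)$-dependence one only has the \emph{partial} degeneracy $\mathbb{E}\psi_2(X,y) = 0$ whenever $X$ is independent of $y$ and has the marginal distribution of $\zeta_{1,m}$. I would expand $\mathbb{E}R_N^2$ into a quadruple sum and apply this identity: for any 4-tuple $(i_1,j_1,i_2,j_2)$ in which at least one of the four indices lies at distance $\geq m$ from the other three, the corresponding cross-moment vanishes by integrating out that isolated variable (a symmetric argument also handles quadruples with coincident indices). The surviving ``fully clustered'' quadruples, in which every index has another within distance $<m$, number at most $O(m^2 N^2)$; combined with the $O(N^2)$ diagonal contribution, this yields $\mathbb{E}R_N^2 = O(m^2/N^2)$ and hence $\sqrt{N}R_N = o_p(1)$ for fixed $m$.

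Slutsky's theorem then combines the two pieces to give $\sqrt{N}(U_N^\# - \theta)/(2\sigma_\psi) \xrightarrow{d} N(0,1)$, and the uniform convergence asserted in the lemma is automatic from Poly\'a's theorem since the standard normal CDF is continuous. The only genuine obstacle is the combinatorial bookkeeping in Step 3: identifying the quadruples of indices on which the partial-degeneracy identity can be invoked, and verifying that the remaining ``clustered'' quadruples are sparse enough to make the remainder negligible.
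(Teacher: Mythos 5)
The paper does not prove this lemma at all: it is quoted verbatim as a known result from \cite{wang_1999}, so there is no internal proof to compare yours against. Your sketch is essentially the canonical way such results are established --- Hoeffding projection, a Hoeffding--Robbins-type CLT for the strictly stationary $(m-1)$-dependent linear part $\{\psi_1(\zeta_{i,m})\}$ with long-run variance $\sigma_\psi^2$ (note the normalization: $\frac{2}{\sqrt{N}}\sum_i\psi_1(\zeta_{i,m})\xrightarrow{d}N(0,4\sigma_\psi^2)$, which matches the $2\sigma_\psi$ in the denominator), negligibility of the degenerate remainder by a second-moment count over quadruples of indices, and P\'olya's theorem to upgrade pointwise to uniform convergence. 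Your observation that only \emph{partial} degeneracy of $\psi_2$ survives under dependence, and that the cross-moment vanishes whenever one index sits at distance $\geq m$ from the other three, is exactly the right replacement for the iid orthogonality argument. Be aware, though, that Wang's actual result is a Berry--Esseen-type bound (with a rate), which is strictly stronger than the limit statement you prove; your argument recovers only the $o(1)$ version, which is all the paper subsequently uses.

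There is one genuine gap in your Step 3. The hypotheses as stated --- $\sigma_\psi^2<\infty$ and $\mathbb{E}|\psi_1(\zeta_{1,m})|^3<\infty$ --- control moments of the \emph{projected} kernel only. To bound $\mathbb{E}R_N^2$ you need $\mathbb{E}\big[\psi_2^2(\zeta_{i,m},\zeta_{j,m})\big]<\infty$, and for the ``clustered'' pairs with $0<|i-j|<m$ this requires a second moment of the kernel itself evaluated at \emph{dependent} arguments, which does not follow from the stated conditions. (These same pairs also make $\mathbb{E}[\psi_2(\zeta_{i,m},\zeta_{j,m})]\neq 0$, so $\mathbb{E}R_N=O(m/N)$; your $L^2$ bound absorbs this, but only once the integrability is secured.) This is precisely why the paper's working assumption \textbf{A1} demands $\mathbb{E}|h(\zeta_{1,m},\zeta_{j,m})|^3<\infty$ for \emph{all} lags $j=1,\dots,m+1$, not just the independent one; you should either import such a condition explicitly or note that it is implicit in the moment hypotheses of the cited theorem. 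With that repair, your argument goes through.
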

Some important identities involved in asymptotic moments of $U_N$ are given in following result.
\begin{lemma}\label{lemma3}
Take $g(x)=\mathbb{E}h(\zeta_{1,m},x)$ then, we have
\begin{align*}
	&\mathbb{E}\pmb(h(\zeta_{1,m},\zeta_{m+1,m})\pmb)=~\mathbb{E}\pmb(g(\zeta_{1,m})\pmb)=\theta,\\
	&\sum_{i=1}^{2m}cov\pmb(h(\zeta_{m,m},\zeta_{4m,m}),h(\zeta_{i,m},\zeta_{6m,m})\pmb)=~
	\sum_{i=1}^{2m}cov\pmb(g\left(\zeta_{m,m}\right),g\left(\zeta_{i,m}\right)\pmb),\\
	&cov\pmb(h(\zeta_{1,m},\zeta_{m+1,m}),\zeta_{1,m}\pmb)=~cov\pmb(g(\zeta_{1,m}),\zeta_{1,m}\pmb),\\
	\text{and }& cov\pmb(h(\zeta_{1,m},\zeta_{m+1,m}),(\zeta_{1,m}-m-1)^2+(\zeta_{m+1,m}-m-1)^2\pmb)\\
	&=~2cov\pmb(g(\zeta_{1,m}),(\zeta_{1,m}-m-1)^2\pmb).
\end{align*}
\end{lemma}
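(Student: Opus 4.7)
The plan is to derive all four identities from two recurring ideas: (i) the $(m-1)$-dependence of $\{\zeta_{j,m}\}$, which makes $\zeta_{j,m}$ and $\zeta_{k,m}$ independent whenever $|j-k|\geq m$; and (ii) strict stationarity of the sequence together with the symmetry of $h$, so that whenever $Y$ is independent of $\zeta_{1,m}$ and equal in distribution to $\zeta_{1,m}$, the definition $g(x)=\mathbb{E}[h(\zeta_{1,m},x)]$ gives $\mathbb{E}[h(\zeta_{1,m},Y)\mid Y]=g(Y)$ (independence lets us freeze $Y$ inside the expectation, symmetry of $h$ lets us swap slots, and equality in distribution identifies the resulting integral with $g$).

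I would first dispose of identity~1: since $\zeta_{m+1,m}$ depends only on $Z_{m+1},\ldots,Z_{2m}$ it is independent of $\zeta_{1,m}$, so conditioning on $\zeta_{m+1,m}$ gives $\mathbb{E}[h(\zeta_{1,m},\zeta_{m+1,m})]=\mathbb{E}[g(\zeta_{m+1,m})]=\mathbb{E}[g(\zeta_{1,m})]$ by stationarity. Identity~3 is the same argument applied to the joint moment $\mathbb{E}[\zeta_{1,m}\,h(\zeta_{1,m},\zeta_{m+1,m})]$: condition on $\zeta_{1,m}$, use independence from $\zeta_{m+1,m}$ and symmetry of $h$ to reduce the inner expectation to $g(\zeta_{1,m})$, then subtract $\theta\,\mathbb{E}[\zeta_{1,m}]$. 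Identity~4 follows by splitting the covariance along the two summands in the second slot; the piece with $(\zeta_{1,m}-m-1)^2$ is handled as in identity~3 by conditioning on $\zeta_{1,m}$, and the piece with $(\zeta_{m+1,m}-m-1)^2$ is handled symmetrically by conditioning on $\zeta_{m+1,m}$ (this is where the symmetry of $h$ is needed to turn the inner expectation into $g$). Stationarity makes both pieces equal to $\mathrm{cov}(g(\zeta_{1,m}),(\zeta_{1,m}-m-1)^2)$, producing the factor~$2$.

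The only step that needs real care is identity~2. Here I would first check the independence pattern determined by the specific indices: $\zeta_{6m,m}$ uses $Z_{6m},\ldots,Z_{7m-1}$ and is therefore independent of $\zeta_{m,m}$, $\zeta_{4m,m}$, and $\zeta_{i,m}$ for every $1\leq i\leq 2m$; likewise $\zeta_{4m,m}$ is independent of $\zeta_{m,m}$ and of every $\zeta_{i,m}$ for $1\leq i\leq 2m$. The choice of the separations $m$, $4m$, $6m$ in the statement is precisely what is needed to force all relevant gaps to be at least~$m$. Conditioning first on the triple $(\zeta_{m,m},\zeta_{4m,m},\zeta_{i,m})$ replaces $h(\zeta_{i,m},\zeta_{6m,m})$ by $g(\zeta_{i,m})$ (by (ii) with $\zeta_{6m,m}\stackrel{d}{=}\zeta_{1,m}$); conditioning next on $(\zeta_{m,m},\zeta_{i,m})$ replaces $h(\zeta_{m,m},\zeta_{4m,m})$ by $g(\zeta_{m,m})$. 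The mixed moment collapses to $\mathbb{E}[g(\zeta_{m,m})\,g(\zeta_{i,m})]$, and subtracting $\theta^{2}$ yields $\mathrm{cov}(g(\zeta_{m,m}),g(\zeta_{i,m}))$; summing over $i$ gives the claim.

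The main obstacle is bookkeeping in identity~2: verifying the index gaps that justify each successive conditional expectation, and invoking stationarity to identify $\zeta_{6m,m}$ and $\zeta_{4m,m}$ with $\zeta_{1,m}$ in the defining expectation of $g$. Once that set-up is in place, every identity is one application of the tower property combined with the symmetry of~$h$.
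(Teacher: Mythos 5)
Your proposal is correct: all four identities follow exactly as you describe, from the tower property combined with the independence of $\zeta_{j,m}$ and $\zeta_{k,m}$ when $|j-k|\geq m$, stationarity, and the symmetry of $h$, and your index bookkeeping for identity 2 (that $\zeta_{6m,m}$ is independent of $(\zeta_{m,m},\zeta_{4m,m},\zeta_{i,m})$ and $\zeta_{4m,m}$ is independent of $(\zeta_{m,m},\zeta_{i,m})$ for all $1\leq i\leq 2m$) is accurate. The paper states Lemma~\ref{lemma3} without proof, so there is no argument to compare against, but yours is the natural one and is complete.
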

Using the above results, the asymptotic null distribution of second order U-statistic based on the overlapping $m$-spacings is given by the following result.
\begin{thm}\label{thm1}
Under the null hypothesis, and assumptions (\textbf{A1}), 
\begin{align}
	\sqrt{n}\left(\binom{n-m+1}{2}^{-1}\sum_{1\leq j<k\leq n-m+1}h\left(nT_{j,n}^{(m)},nT_{k,n}^{(m)}\right)-\theta\right)\xrightarrow{d}N\big(0,\sigma^2_h=4(A-B^2)\big),
\end{align}	
as $n\to\infty$, where
\begin{align*}
	A=&~\sum_{j=1}^{2m}cov\pmb(h(\zeta_{m,m},\zeta_{4m,m}),h(\zeta_{j,m},\zeta_{6m,m})\pmb)\text{ and }\\
	B=&~cov\pmb(h(\zeta_{1,m},\zeta_{m+1,m}), \zeta_{1,m}\pmb).
\end{align*}
\end{thm}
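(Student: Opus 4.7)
My overall plan is to exploit the exponential representation (9), which lets us replace $W_{m,n}(h)$ by a conditional version of the unconditional U-statistic $U_N^\# = \binom{N}{2}^{-1}\sum_{i<j}h(\zeta_{i,m},\zeta_{j,m})$, with $N=n-m+1$, and then to use Holst's inversion (Lemma \ref{lemma1}) to turn a joint \emph{unconditional} CLT into the conditional CLT we actually want. Concretely, (9) gives $W_{m,n}(h)\stackrel{d}{=}U_N^\#\mid\bar\zeta_n=1$, and because $\{\zeta_{j,m}\}$ is strictly stationary and $(m-1)$-dependent, the marginal CLT for $U_N^\#$ is supplied by Lemma \ref{lemma2}. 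The proof should therefore proceed in three stages: (i) establish a joint CLT for $\bigl(\sqrt N(U_N^\#-\theta),\sqrt n(\bar\zeta_n-1)\bigr)$ with an explicit covariance matrix $\Sigma$; (ii) feed this joint limit into the inversion identity of Lemma \ref{lemma1}; (iii) verify that the resulting Schur complement is exactly $4(A-B^2)$.

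For the joint CLT, I would start from the Hoeffding decomposition
\[
U_N^\#-\theta=\frac{2}{N}\sum_{i=1}^{N}[g(\zeta_{i,m})-\theta]+R_N,
\]
where $g(x)=\mathbb{E}[h(\zeta_{1,m},x)]$ and the degenerate second-order remainder $R_N$ is $O_P(1/N)$ under \textbf{A1}. The resulting pair $\bigl(\tfrac{2}{\sqrt N}\sum_i[g(\zeta_{i,m})-\theta],\tfrac{1}{\sqrt n}\sum_j(Z_j-1)\bigr)$ is a normalized sum of $(m-1)$-dependent random vectors with finite third moments, so a multivariate CLT for $m$-dependent sequences (the vector version of the tool behind Lemma \ref{lemma2}) yields joint asymptotic normality. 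The $(1,1)$ entry collapses to $4A$ via the first identity of Lemma \ref{lemma3}; the $(2,2)$ entry is $\mathrm{Var}(Z_1)=1$. For the cross term, $\mathrm{Cov}(g(\zeta_{i,m}),Z_j)=0$ whenever $j\notin\{i,\ldots,i+m-1\}$, so $\sum_j\mathrm{Cov}(g(\zeta_{i,m}),Z_j)=\mathrm{Cov}(g(\zeta_{i,m}),\zeta_{i,m})=B$ by the third identity of Lemma \ref{lemma3}; summing over $i$ and normalizing by $\sqrt{Nn}$ gives $2B$. Hence $\Sigma=\bigl(\begin{smallmatrix}4A & 2B\\ 2B & 1\end{smallmatrix}\bigr)$.

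Next I would use $T_{j,n}^{(m)}=T_{j,n}^{(1)}+\cdots+T_{j+m-1,n}^{(1)}$ to express $W_{m,n}(h)$ as a symmetric function of the simple spacings and apply Lemma \ref{lemma1} with $\psi$ equal to $\sqrt n$ times the centered statistic. The resulting identity writes the characteristic function of $\sqrt n(W_{m,n}(h)-\theta)$ as a one-dimensional Fourier integral in $u$ of the joint characteristic function of $\bigl(\sqrt n(U_N^\#-\theta),\tfrac{1}{\sqrt n}\sum_j(Z_j-1)\bigr)$ evaluated at $(t,u)$. A Stirling estimate of the explicit prefactor in Lemma \ref{lemma1} produces $(2\pi)^{-1/2}$; plugging in the limiting bivariate Gaussian characteristic function $\exp\{-\tfrac12(4At^2+4Btu+u^2)\}$ and completing the square in $u$ leaves the single factor $\exp\{-\tfrac12\cdot 4(A-B^2)t^2\}$, the characteristic function of $N(0,4(A-B^2))$. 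The factor $\sqrt n/\sqrt N\to 1$ absorbs the mismatch between the two normalizations.

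I expect Step~3 to be the main obstacle: turning pointwise convergence of the integrand in Holst's formula into convergence of the integral requires a uniform-in-$n$, integrable dominating function for the joint characteristic function of $\bigl(\sqrt n(U_N^\#-\theta),\tfrac{1}{\sqrt n}\sum_j(Z_j-1)\bigr)$, which typically comes from a Berry--Esseen-type tail bound tailored to the $(m-1)$-dependent triangular array under \textbf{A1}. Once that analytical point is handled, the Hoeffding decomposition, the covariance bookkeeping, and the algebraic reductions provided by Lemma \ref{lemma3} make the remaining computations routine.
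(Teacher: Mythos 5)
Your proposal follows essentially the same route as the paper: Holst's inversion formula (Lemma \ref{lemma1}) to reduce the conditional statistic to a Fourier integral of a joint characteristic function, a joint CLT for the pair $\bigl(\sqrt{N}(U_N^\#-\theta),\,n^{-1/2}\sum_j(Z_j-1)\bigr)$ with covariance matrix $\bigl(\begin{smallmatrix}4A & 2B\\ 2B & 1\end{smallmatrix}\bigr)$, and completion of the square to arrive at $e^{-2t^2(A-B^2)}$; your covariance bookkeeping agrees with the paper's $\sigma^2(t,u)=t^2A+tuB+\tfrac14u^2$. Two execution details differ: the paper obtains the joint limit not by Hoeffding projection plus a multivariate $m$-dependent CLT but by applying Lemma \ref{lemma2} (Cram\'er--Wold) to the combined kernel $th^*(\zeta_{j,m},\zeta_{k,m})+u(\zeta_{j,m}+\zeta_{k,m}-2m)/(2m)$, which is equivalent in substance; and the domination issue you correctly flag as the main obstacle is resolved in the paper by Holst's truncation device --- restricting the kernel sum to the first $M$ spacings with $M/n\to\alpha<1$, so that the independent tail factor $\mathbb{E}\bigl[e^{iun^{-1/2}\sum_{j=M+1}^{n}(Z_j-1)}\bigr]\to e^{-(1-\alpha)u^2/2}$ supplies the integrable dominating function, after which one lets $\alpha\uparrow1$ --- rather than by a Berry--Esseen tail bound for the dependent array.
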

\begin{proof}
Let $h^*(x,y)=h(x,y)-\theta$, $x\geq0,~y\geq0$, $N=n-m+1$ and $1\leq M\leq n-3$.
Using Lemma \ref{lemma1}, the characteristic function of $\dfrac{2\sqrt{n}}{MN}\sum\limits_{1\leq j<k\leq M-m}h^*\left(nT_{j,n}^{(m)},nT_{k,n}^{(m)}\right)$ is given by
\begin{align*}
	\Psi_M(t)=\frac{1}{a_n}\int_{-\infty}^{\infty}&\mathbb{E}\bigg[e^{i\frac{2\sqrt{n}}{MN}t \sum\limits_{1\leq j<k\leq M-m+1}h^*\left(\zeta_{j,m},\zeta_{k,m}\right)+iu\sum_{j=1}^{M}(Z_j-1)}\bigg]\\
	&\times\mathbb{E}\bigg[e^{iu\sum_{j=M+1}^{n}(Z_j-1)}\bigg]~du,
\end{align*}
where $a_n={2\pi n^{n-1}e^{-n}/(n-1)!}$. Using Stirling's formula $a_n\approx \sqrt{\frac{2\pi}{n}}$, for large $n$.

Let $n,M\to\infty$ such that $\frac{M}{n}\to\alpha$, where $0<\alpha<1$. Then 
\begin{align*}
	\mathbb{E}\bigg[e^{iu\frac{1}{\sqrt{n}}\sum_{j=M+1}^{n}(Z_j-1)}\bigg]\to e^{-(1-\alpha)u^2/2}.
\end{align*}
By a variant of the Lebesgue DCT,
\begin{align*}
	\Psi_M(t)\to\Psi_\alpha(t)=\frac{1}{\sqrt{2\pi}}\int_{-\infty}^\infty f_{\alpha}(t,u) 
	e^{-(1-\alpha)u^2/2}~du,
\end{align*}
if
\begin{align*}
	f_M(t,u)=\mathbb{E}\bigg[e^{i\frac{2\sqrt{n}}{MN}t \sum\limits_{1\leq j<k\leq M-m+1}h^*\left(\zeta_{j,m},\zeta_{k,m}\right)+iu\frac{1}{\sqrt{n}}\sum_{j=1}^{M}(Z_j-1)}\bigg]\to f_\alpha(t,u).
\end{align*}
Observe that the random quantity
\begin{align*}
	{\frac{2\sqrt{n}}{MN}t \sum\limits_{1\leq j<k\leq M-m+1}h^*\left(\zeta_{j,m},\zeta_{k,m}\right)+u\frac{1}{\sqrt{n}}\sum_{j=1}^{M}(Z_j-1)}
\end{align*}
has the same asymptotic distribution as 
\begin{align*}
	\frac{2\sqrt{n}}{MN} \sum\limits_{1\leq j<k\leq M-m+1}\left(th^*\left(\zeta_{j,m},\zeta_{k,m}\right)+u\frac{\zeta_{j,m}+\zeta_{k,m}-2m}{2m}\right).
\end{align*}
Using Lemma \ref{lemma2}, we get 
\begin{align*}
	&\sqrt{M-m+1}\binom{N-m+1}{2}^{-1}\sum_{1\leq j<k\leq M-m+1}\left(th^*\left(\zeta_{j,m},\zeta_{k,m}\right)+u\frac{\zeta_{j,m}+\zeta_{k,m}-2m}{2m}\right)\\
	&~~\xrightarrow{d}N(0,4\sigma^2(t,u)),\\
	\Rightarrow&~\sqrt{n}\left(\frac{2}{MN} \sum\limits_{1\leq j<k\leq M-m+1}\left(th^*\left(\zeta_{j,m},\zeta_{k,m}\right)+u\frac{\zeta_{j,m}+\zeta_{k,m}-2m}{2m}\right)\right)\xrightarrow{d}N(0,4\alpha\sigma^2(t,u)),
\end{align*}
where
\begin{align*}
	\sigma^2(t,u)=&~V\left(tg(\zeta_{1,m})+u\frac{\zeta_{1,m}-m}{2m}\right)
	+2\sum_{j=2}^{m}Cov\left(tg(\zeta_{1,m})+u\frac{\zeta_{1,m}-m}{2m},tg(\zeta_{j,m})+u\frac{\zeta_{j,m}-m}{2m}\right),\\
	g(x)=&~\mathbb{E}h(\zeta_{1,m},x).
\end{align*}
Hence it follows that
\begin{align*}
	f_M(t,u)\to f_\alpha(t,u)=~e^{-4\alpha\sigma^2(t,u)/2}.
\end{align*}
and, consequently
\begin{align*}
	\Psi_M(t)\to\Psi_\alpha(t)=\frac{1}{\sqrt{2\pi}}\int_{-\infty}^\infty e^{-\frac{1}{2}(4\alpha\sigma^2(t,u)+(1-\alpha)u^2)}~du
\end{align*}
Clearly, $\Psi_\alpha(t)\to 1$ as $\alpha\downarrow0$. By using arguments similar to the proof of Theorem in \cite{holst_1979}, we have
\begin{align*}
	&\mathbb{E}\left(e^{it\sqrt{n}\binom{N}{2}^{-1}\sum\limits_{1\leq j<k\leq N}\left[h\left(nT_{j,n}^{(m)},nT_{k,n}^{(m)}\right)-\theta\right]}\right)\\
	\rightarrow&~ \lim_{\alpha\uparrow1} \Psi_\alpha(t)=\frac{1}{\sqrt{2\pi}}\int_{-\infty}^\infty 
	e^{-2\sigma^2(t,u)}~du.
\end{align*}
Elementary simplification yields
\begin{align*}
	\sigma^2(t,u)=~& t^2\left[V(g(\zeta_{1,m}))+2\sum_{j=2}^{m}Cov(g(\zeta_{1,m}),g(\zeta_{j,m}))\right]\\
	+~&\frac{tu}{m}\left[Cov(g(\zeta_{1,m}),\zeta_{1,m})+ \sum_{j=2}^{m}Cov(g(\zeta_{1,m}),\zeta_{j,m})+ \sum_{j=2}^{m}Cov(g(\zeta_{j,m}),\zeta_{1,m})\right]+~\frac{1}{4}u^2,\\
	=~&t^2A+tuB+\frac{1}{4} u^2.
\end{align*}
Hence,
\begin{align*}
	\frac{1}{\sqrt{2\pi}}\int_{-\infty}^\infty 
	e^{-2\sigma^2(t,u)}~du=~e^{-2t^2(A-B^2)}.
\end{align*}
This concludes the proof.
\end{proof}
\section{Asymptotic distribution under a sequence of local alternatives}
In this section, we will discuss the asymptotic distribution of second order U-statistic of the overlapping $m$-spacings, under a popular sequence of local alternatives. For assessing performances of the tests, a popular method is to investigate the performances of tests under a sequence of local alternatives converging to the null distribution at some appropriate rate. We should consider a sequence of local alternatives $\{F_n:n\geq1\}$ that converges to the uniform distribution on $[0,1]$ as $n\to\infty$. For tests based on symmetric sum of overlapping $m$-spacings, an appropriate sequence of alternative (cf. \cite{kuo_1981}) is 
\begin{align}\label{fn_alt}
F_n(x)= x+ \frac{L_n(x)}{\sqrt[4]{n}}, \text{ for } 0\leq x\leq 1,
\end{align}
where $L_n(0)=L_n(1)=0$. Further, assume that $L_n(x)$ is twice differentiable on the unit interval $[0,1]$ and there exists a function $L(x)$ which is twice continuously differentiable with $L(0)=L(1)=0$, such that
\begin{align*}
\sqrt[4]{n}\sup_{\substack{0\leq x \leq 1}}|L_n(x)-L(x)|&= o(1),\\
\sqrt[4]{n}\sup_{\substack{0\leq x \leq 1}}|L_n'(x)-L'(x)|&= o(1),\\
\text{ and } \sqrt[4]{n}\sup_{\substack{0\leq x \leq 1}}|L_n''(x)-L''(x)|&= o(1).
\end{align*}
The asymptotic distribution of symmetric sum function of overlapping $m$-spacings, under the sequence of above mentioned alternatives was obtained by \cite{kuo_1981}. Following Kuo and Rao (1981), we will derive the asymptotic distribution of the proposed second order U-statistic of the overlapping $m$-spacings under the sequence of loacl alternatives (\ref{fn_alt}).

Consider the sequence of real numbers $\{\xi_i\}$, such that
\begin{align*}
\xi_i=\frac{i}{n},\text{ for }i=1,2,\ldots,n.
\end{align*}
Under the sequence of local alternatives (\ref{fn_alt}), $D_{i,n}^{(m)}$ is related to $T_{i,n}^{(m)}$ by following relation
\begin{align}\label{eq_taylor}
nD_{i,n}^{(m)}=&n[F_n^{-1}(U_{i+m-1:n})-F_n^{-1}(U_{i-1:n})]\nonumber\\
=&nT_{i,n}^{(m)}+\left(\frac{-L'(\xi_i)}{\sqrt[4]{n}}+
\frac{[L'(\xi_i)]^2+L(\xi_i)L''(\xi_i)}{\sqrt{n}}\right)nT_{i,n}^{(m)}+o_p(n^{-1/2}),
\end{align}
where $o_p(\cdot)$ is uniform in $i$. This is obtained using the mean value theorem and continuity of $h$  (cf. \cite{rao_1975}).

Assume that $h$ has the first and second order partial derivatives. We denote the first order partial derivatives by $\frac{\partial}{\partial x}h(x,y)=h_x(x,y)$ and $\frac{\partial}{\partial y}h(x,y)=h_y(x,y)$, and the second order partial derivatives by $\frac{\partial^2}{\partial x^2}h(x,y)=h_{xx}(x,y)$, $\frac{\partial^2}{\partial y^2}h(x,y)=h_{yy}(x,y)$ and $\frac{\partial^2}{\partial x\partial y}h(x,y)=h_{xy}(x,y)$. Equation (\ref{eq_taylor}) is used to prove the following result.
\begin{lemma}\label{lemma7}
Under the sequence of local alternatives (\ref{fn_alt}), as $n\to\infty$, 
\begin{align}
	&\sqrt{n}\begin{pmatrix}
		n-m+1\\2\\
	\end{pmatrix}^{-1}\sum_{1\leq i<j\leq n-m+1}\left[h\left(nD_{i,n}^{(m)},nD_{j,n}^{(m)}\right)-h\left(nT_{i,n}^{(m)},nT_{j,n}^{(m)}\right)\right]\nonumber\\
	&\xrightarrow{p}\frac{1}{2} cov[h(\zeta_{1,m},\zeta_{m+1,m}),(\zeta_{1,m}-m-1)^2+(\zeta_{m+1,m}-m-1)^2]
	\int_{0}^{1}(L'(u))^2~du.
\end{align}
\end{lemma}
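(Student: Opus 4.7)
My plan is to carry out a second-order Taylor expansion of $h(nD_{i,n}^{(m)}, nD_{j,n}^{(m)})$ around $(nT_{i,n}^{(m)}, nT_{j,n}^{(m)})$, substitute the displacement formula (\ref{eq_taylor}), and classify the resulting terms by their overall order after multiplication by $\sqrt{n}$ and averaging over pairs. Writing $\Delta_i := nD_{i,n}^{(m)} - nT_{i,n}^{(m)}$, (\ref{eq_taylor}) gives $\Delta_i = n^{-1/4}a_i + n^{-1/2}b_i + o_p(n^{-1/2})$ with $a_i = -L'(\xi_i)\,nT_{i,n}^{(m)}$ and $b_i = ([L'(\xi_i)]^2+L(\xi_i)L''(\xi_i))\,nT_{i,n}^{(m)}$, while Taylor's theorem (assuming $h\in C^2$) gives
\[
h(nD_i,nD_j)-h(nT_i,nT_j) = h_x\Delta_i + h_y\Delta_j + \tfrac{1}{2}\bigl(h_{xx}\Delta_i^2+2h_{xy}\Delta_i\Delta_j+h_{yy}\Delta_j^2\bigr) + o_p(n^{-1/2}),
\]
with partial derivatives of $h$ evaluated at $(nT_i,nT_j)$.

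After scaling by $\sqrt{n}$ and averaging over $\binom{N}{2}$ pairs, four contributions need to be tracked: (a) the first-order $n^{-1/4}$ part, of order $n^{1/4}$; (b) the first-order $n^{-1/2}$ part, of order $1$; (c) the mixed second-order term $h_{xy}a_ia_j$, of order $1$; and (d) the diagonal second-order terms $\tfrac{1}{2}(h_{xx}a_i^2+h_{yy}a_j^2)$, of order $1$. For (a), Lemma \ref{lemma2} combined with the symmetry-induced identity $\mathbb{E}[h_x(\zeta_{1,m},\zeta_{m+1,m})\zeta_{1,m}] = \mathbb{E}[h_y(\zeta_{1,m},\zeta_{m+1,m})\zeta_{m+1,m}]$ reduces the expectation to a multiple of $\int_0^1 L'(u)\,du = L(1)-L(0) = 0$, with Riemann-sum and distributional biases of order $n^{-1/2}$ and fluctuations of order $N^{-1/2}$; hence $n^{1/4}$ times the whole thing is $o_p(1)$. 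For (b), the analogous limit is a multiple of $\int_0^1\bigl([L'(u)]^2+L(u)L''(u)\bigr)\,du$, which vanishes after integration by parts using $L(0)=L(1)=0$. For (c), the deterministic factor $\sum_{i<j}L'(\xi_i)L'(\xi_j) = \tfrac{1}{2}\bigl[(\sum_i L'(\xi_i))^2 - \sum_i L'(\xi_i)^2\bigr]$ is only $O(N)$ (since $\sum_i L'(\xi_i)=O(1)$), so dividing by $\binom{N}{2}$ gives $O(1/N)\to 0$. Only (d) survives; the weighted Riemann sums $\sum_{i<j}L'(\xi_i)^2/\binom{N}{2}\to 2\int_0^1(1-u)L'(u)^2\,du$ and $\sum_{i<j}L'(\xi_j)^2/\binom{N}{2}\to 2\int_0^1 uL'(u)^2\,du$ combine, together with $\mathbb{E}[h_{xx}(\zeta_{1,m},\zeta_{m+1,m})\zeta_{1,m}^2] = \mathbb{E}[h_{yy}(\zeta_{1,m},\zeta_{m+1,m})\zeta_{m+1,m}^2]$ (from $h_{yy}(x,y)=h_{xx}(y,x)$ and exchangeability of the two independent gamma$(m,1)$ variables), to yield the limit $\mathbb{E}[h_{xx}(\zeta_{1,m},\zeta_{m+1,m})\zeta_{1,m}^2]\int_0^1 L'(u)^2\,du$.

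The final step rewrites $\mathbb{E}[h_{xx}(\zeta_{1,m},\cdot)\zeta_{1,m}^2]$ in the covariance form of the claim. Two integrations by parts against the gamma$(m,1)$ density $f(x) = x^{m-1}e^{-x}/\Gamma(m)$, using $\tfrac{d}{dx}[xf(x)]=(m-x)f(x)$ and $\tfrac{d}{dx}[x^2f(x)]=((m+1)x-x^2)f(x)$, combined with $\mathbb{E}(\zeta_{1,m}-m-1)^2 = V(\zeta_{1,m})+1 = m+1$, yield the Stein-type identity
\[
\mathbb{E}[h_{xx}(\zeta_{1,m},y)\zeta_{1,m}^2] = cov\bigl(h(\zeta_{1,m},y),(\zeta_{1,m}-m-1)^2\bigr)
\]
for each fixed $y$. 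Averaging over the independent $y=\zeta_{m+1,m}$ and using $h(x,y)=h(y,x)$ to write $cov(h,(\zeta_{1,m}-m-1)^2) = \tfrac{1}{2}cov(h,(\zeta_{1,m}-m-1)^2+(\zeta_{m+1,m}-m-1)^2)$ delivers the claimed limit. The most delicate part is showing (a) vanishes in probability: the $n^{1/4}$ amplification forces one to control both bias and variance at rates strictly faster than what a naive law of large numbers gives, and this is exactly where the symmetry of $h$, the Riemann-sum rate for $C^1$ functions, and the boundary conditions $L(0)=L(1)=0$ must all be used.
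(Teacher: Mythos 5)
Your proof follows essentially the same route as the paper's: the same two-dimensional Taylor expansion around $(nT_{i,n}^{(m)},nT_{j,n}^{(m)})$, the same classification of terms by order, the same use of $\int_0^1 L'(u)\,du=0$ (via the trapezoid/Riemann-sum rate, after combining the two first-order pieces by the symmetry of $h$) and of $\int_0^1\bigl([L'(u)]^2+L(u)L''(u)\bigr)du=0$ to kill everything except the diagonal second-derivative terms, and the same concluding covariance identity, which the paper simply asserts and you correctly rederive as a Stein-type identity for the gamma$(m,1)$ density. One detail in your favour: your evaluation of the triangular weight sums, $\binom{N}{2}^{-1}\sum_{i<j}L'(\xi_i)^2\to 2\int_0^1(1-u)L'(u)^2du$ and $\binom{N}{2}^{-1}\sum_{i<j}L'(\xi_j)^2\to 2\int_0^1 uL'(u)^2du$, is more careful than the paper's (which attributes $\int_0^1 L'(u)^2du$ to each term separately), and the two computations agree only after the $h_{xx}$ and $h_{yy}$ contributions are added using $\mathbb{E}[\zeta_{1,m}^2h_{xx}]=\mathbb{E}[\zeta_{m+1,m}^2h_{yy}]$, so the final limit is unchanged.
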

\begin{proof}
Using (\ref{eq_taylor}) and the two dimensional Taylor expansion of $h(nD_{i,n}^{(m)},nD_{j,n}^{(m)})$ around $(nT_{i,n}^{(m)},nT_{j,n}^{(m)})$, we have, for $1\leq i<j\leq N$
\begin{small}
	\begin{align*}
		&h(nD_{i,n}^{(m)},nD_{j,n}^{(m)})-h(nT_{i,n}^{(m)},nT_{j,n}^{(m)})\\
		=& \left(\frac{-L'(\xi_i)}{\sqrt[4]{n}}+\frac{(L'(\xi_i))^2+L(\xi_i)L''(\xi_i)} {\sqrt{n}}\right)(nT_{i,n}^{(m)})h_x\left(nT_{i,n}^{(m)},nT_{j,n}^{(m)}\right)\\
		&+ \left(\frac{-L'(\xi_j)}{\sqrt[4]{n}}+\frac{(L'(\xi_j))^2+L(\xi_j)L''(\xi_j)} {\sqrt{n}}\right)(nT_{j,n}^{(m)})h_y\left(nT_{i,n}^{(m)},nT_{j,n}^{(m)}\right)\\
		&+ \left(\frac{-L'(\xi_i)}{\sqrt[4]{n}}+\frac{(L'(\xi_i))^2+L(\xi_i)L''(\xi_i)} {\sqrt{n}}\right) \left(\frac{-L'(\xi_j)}{\sqrt[4]{n}}+\frac{(L'(\xi_j))^2+L(\xi_j)L''(\xi_j)} {\sqrt{n}}\right)\\
		&~~~~\times (nT_{i,n}^{(m)})(nT_{j,n}^{(m)}) h_{xy}\left(nT_{i,n}^{(m)},nT_{j,n}^{(m)}\right)\\
		&+ \frac{1}{2} \left(\frac{-L'(\xi_i)}{\sqrt[4]{n}}+\frac{(L'(\xi_i))^2+L(\xi_i)L''(\xi_i)} {\sqrt{n}}\right)^2(nT_{i,n}^{(m)})^2 h_{xx}\left(nT_{i,n}^{(m)},nT_{j,n}^{(m)}\right)\\
		&+ \frac{1}{2} \left(\frac{-L'(\xi_j)}{\sqrt[4]{n}}+\frac{(L'(\xi_j))^2+L(\xi_j)L''(\xi_j)} {\sqrt{n}}\right)^2(nT_{j,n}^{(m)})^2 h_{yy}\left(nT_{i,n}^{(m)},nT_{j,n}^{(m)}\right)+o_p(n^{-1/2}).
	\end{align*}
\end{small}
Therefore,
\begin{small}
	\begin{align}\label{eq26}
		&\sqrt{n}\begin{pmatrix}
			n-m+1\\2\\
		\end{pmatrix}^{-1}\sum_{1\leq i<j\leq n-m+1}\left[h\left(nD_{i,n}^{(m)},nD_{j,n}^{(m)}\right)-h\left(nT_{i,n}^{(m)},nT_{j,n}^{(m)}\right)\right]\nonumber\\
		=&-\sqrt[4]{n}\begin{pmatrix}
			n-m+1\\2\\
		\end{pmatrix}^{-1}\sum_{1\leq i<j\leq n-m+1}L'(\xi_i) (nT_{i,n}^{(m)})h_x\left(nT_{i,n}^{(m)},nT_{j,n}^{(m)}\right)\nonumber\\
		&-\sqrt[4]{n}\begin{pmatrix}
			n-m+1\\2\\
		\end{pmatrix}^{-1}\sum_{1\leq i<j\leq n-m+1}L'(\xi_j) (nT_{j,n}^{(m)})h_y\left(nT_{i,n}^{(m)},nT_{j,n}^{(m)}\right)\nonumber\\
		&+\begin{pmatrix}
			n-m+1\\2\\
		\end{pmatrix}^{-1}\sum_{1\leq i<j\leq n-m+1}[(L'(\xi_i))^2+L(\xi_i)L''(\xi_i)] (nT_{i,n}^{(m)})h_x\left(nT_{i,n}^{(m)},nT_{j,n}^{(m)}\right)\nonumber\\
		&+\begin{pmatrix}
			n-m+1\\2\\
		\end{pmatrix}^{-1}\sum_{1\leq i<j\leq n-m+1}[(L'(\xi_j))^2+L(\xi_j)L''(\xi_j)] (nT_{j,n}^{(m)})h_y\left(nT_{i,n}^{(m)},nT_{j,n}^{(m)}\right)\nonumber\\
		&+\begin{pmatrix}
			n-m+1\\2\\
		\end{pmatrix}^{-1}\sum_{1\leq i<j\leq n-m+1}L'(\xi_i)L'(\xi_j)(nT_{i,n}^{(m)})(nT_{j,n}^{(m)}) h_{xy}\left(nT_{i,n}^{(m)},nT_{j,n}^{(m)}\right)\nonumber\\
		&+\frac{1}{2}\begin{pmatrix}
			n-m+1\\2\\
		\end{pmatrix}^{-1}\sum_{1\leq i<j\leq n-m+1}[L'(\xi_i)]^2 (nT_{i,n}^{(m)})^2h_{xx}\left(nT_{i,n}^{(m)},nT_{j,n}^{(m)}\right)\nonumber\\
		&+\frac{1}{2}\begin{pmatrix}
			n-m+1\\2\\
		\end{pmatrix}^{-1}\sum_{1\leq i<j\leq n-m+1}[L'(\xi_j)]^2 (nT_{j,n}^{(m)})^2h_{yy}\left(nT_{i,n}^{(m)},nT_{j,n}^{(m)}\right)+o_p(1).
	\end{align}
\end{small}
According to the composite trapezoid rule, for any twice continuously differentiable $l$ defined on $[0,1]$ such that $\int_0^1l(u)~du=0$, we have
\begin{align*}
	\lim_{n\to\infty}\sqrt[4]{n}\left[\frac{l''(c)}{12(n+1)^2}-\frac{1}{n}\sum_{k=1}^{n}l(\xi_k)\right]=0,
\end{align*}
where $c\in(0,1)$. 
Thus, the first two terms on the RHS of equation (\ref{eq26}) converge on the probability to zero (cf. \cite{tung_2012a,tung_2012b}).

Using weak law of large numbers and the fact that 
\begin{align*}
	\int_{0}^{1} L(u)L''(u)~du=-\int_{0}^{1} (L'(u))^2~du,
\end{align*}
we get
\begin{align*}
	&\begin{pmatrix}
		n-m+1\\2\\
	\end{pmatrix}^{-1}\sum_{1\leq i<j\leq n-m+1}[(L'(\xi_i))^2+L(\xi_i)L''(\xi_i)] (nT_{i,n}^{(m)})h_x\left(nT_{i,n}^{(m)},nT_{j,n}^{(m)}\right)\\
	&\xrightarrow{p}\mathbb{E}[\zeta_{1,m}h_x(\zeta_{1,m},\zeta_{m+1,m})]\int_{0}^{1} [(L'(u))^2+L(u)L''(u)]~du=0,
\end{align*}
and 
\begin{align*}
	&\begin{pmatrix}
		n-m+1\\2\\
	\end{pmatrix}^{-1}\sum_{1\leq i<j\leq n-m+1}[(L'(\xi_j))^2+L(\xi_j)L''(\xi_j)] (nT_{j,n}^{(m)})h_y\left(nT_{i,n}^{(m)},nT_{j,n}^{(m)}\right)\\
	&\xrightarrow{p}\mathbb{E}[\zeta_{1,m}h_y(\zeta_{1,m},\zeta_{m+1,m})]\int_{0}^{1} [(L'(u))^2+L(u)L''(u)]~du=0.
\end{align*}
Also, observe that
\begin{align*}
	&\begin{pmatrix}
		n-m+1\\2\\
	\end{pmatrix}^{-1}\sum_{1\leq i<j\leq n-m+1}L'(\xi_i)L'(\xi_j) (nT_{i,n}^{(m)})(nT_{j,n}^{(m)}) h_{xy}\left(nT_{i,n}^{(m)},nT_{j,n}^{(m)}\right)\\ &\xrightarrow{p}
	\mathbb{E}[\zeta_{1,m}\zeta_{m+1,m}h_{xy}(\zeta_{1,m},\zeta_{m+1,m})]
	\int_{0}^{1}\int_{0}^{1}L'(u)L'(v)~du~dv=0,
\end{align*}
as
\begin{align*}
	\int_{0}^{1}\int_{0}^{1}L'(u)L'(v)~du~dv=\int_{0}^{1}L'(u)~du~\int_{0}^{1}L'(v)~dv=0.
\end{align*}
Further, observe that
\begin{align*}
	&\frac{1}{2}\begin{pmatrix}
		n-m+1\\2\\
	\end{pmatrix}^{-1}\sum_{1\leq i<j\leq n-m+1}[L'(\xi_i)]^2 (nT_{i,n}^{(m)})^2h_{xx}\left(nT_{i,n}^{(m)},nT_{j,n}^{(m)}\right)\\
	&\xrightarrow{p}\frac{1}{2} \mathbb{E}[\zeta_{1,m}^2h_{xx}(\zeta_{1,m},\zeta_{m+1,m})]
	\int_{0}^{1}(L'(u))^2~du,
\end{align*}

\begin{align*}
	&\frac{1}{2}\begin{pmatrix}
		n-m+1\\2\\
	\end{pmatrix}^{-1}\sum_{1\leq i<j\leq n-m+1}[L'(\xi_j)]^2 (nT_{j,n}^{(m)})^2h_{yy}\left(nT_{i,n}^{(m)},nT_{j,n}^{(m)}\right)\\
	&\xrightarrow{p}\frac{1}{2} \mathbb{E}[\zeta_{m+1,m}^2h_{yy}(\zeta_{1,m},\zeta_{m+1,m})]
	\int_{0}^{1}(L'(v))^2~dv,\text{ as }n\to\infty,
\end{align*}
and
\begin{align*}
	&\mathbb{E}\pmb(\zeta_{1,m}^2h_{xx}(\zeta_{1,m},\zeta_{m+1,m})+\zeta_{m+1,m}^2h_{yy}(\zeta_{1,m},\zeta_{m+1,m})\pmb)\\
	=&cov\pmb(h(\zeta_{1,m},\zeta_{m+1,m}),(\zeta_{1,m}-m-1)^2+(\zeta_{m+1,m}-m-1)^2\pmb).
\end{align*}
This concludes the proof.
\end{proof}
Combining Theorem \ref{thm1} and Lemma \ref{lemma7}, we have the following result.
\begin{thm}\label{thm2}
Under the sequence of local alternatives (\ref{fn_alt}) and assumption (\textbf{A1}), as $n\to\infty$, we have
\begin{align}
	&\sqrt{n}\left(\begin{pmatrix}
		n-m+1\\2\\
	\end{pmatrix}^{-1}\sum_{1\leq i<j\leq n-m+1}h\left(nD_{i,n}^{(m)},nD_{j,n}^{(m)}\right)-\mathbb{E}[h(\zeta_{1,m},\zeta_{m+1,m})]\right)\nonumber\\
	&~~\xrightarrow{d}N(\mu_h,\sigma_h^2),
\end{align}	
where
\begin{align*}
	\mu_h=&\frac{1}{2} cov\pmb(h(\zeta_{1,m},\zeta_{m+1,m}),(\zeta_{1,m}-m-1)^2+(\zeta_{m+1,m}-m-1)^2\pmb)
	\int_{0}^{1}(L'(u))^2~du,\\
	\text{and }&\sigma_h^2 \text{ as in Theorem \ref{thm1}}.
\end{align*}
\end{thm}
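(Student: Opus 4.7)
My plan is to combine Theorem \ref{thm1} with Lemma \ref{lemma7} via Slutsky's theorem, using the fact that the $T_{i,n}^{(m)}$'s have the null distribution even under the local alternative. Specifically, even if $X_i \sim F_n$, the probability integral transformed observations $U_i = F_n(X_i)$ are i.i.d.\ $U(0,1)$, so the $T_{i,n}^{(m)}$'s appearing in Lemma \ref{lemma7} are precisely the uniform overlapping $m$-spacings to which Theorem \ref{thm1} applies.

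Writing $N = n-m+1$ and $\theta = \mathbb{E}[h(\zeta_{1,m},\zeta_{m+1,m})]$, I would decompose
\begin{align*}
\sqrt{n}\left(\binom{N}{2}^{-1}\sum_{1\leq i<j\leq N} h(nD_{i,n}^{(m)},nD_{j,n}^{(m)}) - \theta\right) = A_n + B_n,
\end{align*}
where
\begin{align*}
A_n &= \sqrt{n}\binom{N}{2}^{-1}\sum_{1\leq i<j\leq N}\left[h(nD_{i,n}^{(m)},nD_{j,n}^{(m)}) - h(nT_{i,n}^{(m)},nT_{j,n}^{(m)})\right],\\
B_n &= \sqrt{n}\left(\binom{N}{2}^{-1}\sum_{1\leq i<j\leq N} h(nT_{i,n}^{(m)},nT_{j,n}^{(m)}) - \theta\right).
\end{align*}
By Lemma \ref{lemma7}, $A_n \xrightarrow{p} \mu_h$. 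By Theorem \ref{thm1} applied to the uniform spacings $T_{i,n}^{(m)}$, $B_n \xrightarrow{d} N(0,\sigma_h^2)$. Slutsky's theorem then yields $A_n + B_n \xrightarrow{d} N(\mu_h,\sigma_h^2)$, which is exactly the claimed conclusion.

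The main conceptual point — not a heavy calculation, but the only step that requires care — is to justify that Theorem \ref{thm1} remains available under the local alternative. This is immediate once one observes that $T_{i,n}^{(m)}$ is defined in terms of the (unobserved, but distributionally invariant) uniform order statistics $U_{i:n}$, so its joint law does not depend on whether the data were generated from the null $F_0$ or from $F_n$; only the distribution of $D_{i,n}^{(m)}$ shifts with $F_n$, and that shift is precisely what is captured by the Taylor expansion (\ref{eq_taylor}) underlying Lemma \ref{lemma7}. All moment/integrability conditions needed to invoke both results are already built into assumption (\textbf{A1}) together with the smoothness hypotheses on $h$ and $L$, so no further verification is required beyond quoting the two preceding results and applying Slutsky.
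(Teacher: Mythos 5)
Your proof is correct and follows essentially the same route as the paper: the identical decomposition into the null-distributed term handled by Theorem \ref{thm1} and the difference term handled by Lemma \ref{lemma7}, combined via Slutsky's theorem. Your added remark justifying why Theorem \ref{thm1} still applies under the local alternative (the $T_{i,n}^{(m)}$ retain their null distribution) is a sensible clarification the paper leaves implicit, but it does not change the argument.
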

\begin{proof}
Observe that
\begin{small}
	\begin{align}\label{eq28}
		&\sqrt{n}\left(\begin{pmatrix}
			n-m+1\\2\\
		\end{pmatrix}^{-1}\sum_{1\leq i<j\leq n-m+1}h\left(nD_{i,n}^{(m)},nD_{j,n}^{(m)}\right)-\mathbb{E}[h(\zeta_{1,m},\zeta_{m+1,m})]\right)\nonumber\\
		=&~\sqrt{n}\left(\begin{pmatrix}
			n-m+1\\2\\
		\end{pmatrix}^{-1}\sum_{1\leq i<j\leq n-m+1}h\left(nT_{i,n}^{(m)},nT_{j,n}^{(m)}\right)-\mathbb{E}[h(\zeta_{1,m},\zeta_{m+1,m})]\right)\nonumber\\
		&+\sqrt{n}\begin{pmatrix}
			n-m+1\\2\\
		\end{pmatrix}^{-1}\sum_{1\leq i<j\leq n-m+1}\left[h\left(nD_{i,n}^{(m)},nD_{j,n}^{(m)}\right)-h\left(nT_{i,n}^{(m)},nT_{j,n}^{(m)}\right)\right].
	\end{align}
\end{small}
Using Lemma \ref{lemma7}, the second term on the RHS of (\ref{eq28}) converges in probability to a constant $\mu_h$. Hence using Theorem \ref{thm1} and Slutsky's Theorem, the proof is concluded.
\end{proof}
\section{Asymptotically locally most powerful test}
For the general test statistic $V_{m,n}(g)$ of type  (\ref{one}), asymptotic distribution under the null hypothesis and the sequence of local alternatives (\ref{fn_alt}) is known. With proper centring and scaling of $V_{m,n}(g)$, limiting distribution under the null hypothesis, has zero mean and finite variance (Rao \& Kuo 1981). Suppose that under a sequence of local alternatives the same statistic has asymptotic distribution with mean $\mu_g$ and variance $\sigma_g^2$. The Pitman asymptotic relative efficiency (ARE) of $V_{m,n}(g_1)$ relative to $V_{m,n}(g_2)$ is defined as 
\begin{align*}
ARE(g_1,g_2)=\left(\frac{e^2(g_1)}{e^2(g_2)}\right)^2,\text{ where }e^2(g)=\frac{\mu_g^2}{\sigma_g^2}.
\end{align*}
The quantity $e^2(g)$ is known as the efficacy of the test based on $V_{m,n}(g)$. Asymptotically locally most powerful (ALMP) test is the test with maximum efficacy. To find the ALMP test based on $V_{m,n}(g)$, one need to find $g$, which maximizes
\begin{align*}
e(g)=\frac{\left(\int_{0}^{1}(L'(u))^2du\right)cov\pmb(g(\zeta_{1,m}), (\zeta_{1,m}-m-1)^2\pmb)} {2\left(\sum_{i=1}^{2m}cov\pmb(g\left(\zeta_{m,m}\right),g\left(\zeta_{i,m}\right)\pmb)
	-\left[cov\pmb(g\left(\zeta_{1,m}\right),\zeta_{1,m}\pmb)\right]^2\right)^{1/2}}.
\end{align*}
Kuo and Rao (1981) found that $e(g)$ is maximized for $g(t)=t^2$, and this corresponds to the generalized Greenwood test based on overlapping spacings. 

From Theorem \ref{thm2}, we know that the asymptotic mean and variance of the general test statistic $W_{m,n}(h)$ of the form (\ref{two}) under the sequence of alternatives (\ref{fn_alt}). Under the null hypothesis the asymptotic mean is zero and variance is finite. Thus, the Pitman ARE of test based on $W_{m,n}(h_1)$ with respect to test based on $W_{m,n}(h_2)$, is given by
\begin{align}
ARE(h_1,h_2)=\left(\frac{e^2(h_1)}{e^2(h_2)}\right)^2,
\end{align}
where $e^2(h)=\frac{\mu_h^2}{\sigma_h^2}$ is the efficacy of the test based on $W_{m,n}(h)$. To obtain the ALMP test based on second order U-statistic of the overlapping $m$-spacings, we should find a function $h$, which maximizes the functional
\begin{align}\label{efficacy}
&e(h)=\frac{cov\pmb(h(\zeta_{1,m},\zeta_{m+1,m}),(\zeta_{1,m}-m-1)^2+(\zeta_{m+1,m}-m-1)^2\pmb)
	\int_{0}^{1}(L'(u))^2~du}{4\left(\sum_{i=1}^{2m}cov\pmb(h(\zeta_{m,m},\zeta_{4m,m}),h(\zeta_{i,m},\zeta_{6m,m})\pmb)-\pmb[cov\pmb(h(\zeta_{1,m},\zeta_{m+1,m}), \zeta_{1,m}\pmb)\pmb]^2\right)^{1/2}}.
\end{align}
\begin{lemma}\label{lemma8}
The value of functional $e(h)$ is maximized for the symmetric function $h(x,y)=(x-y)^2$. The maximum value of $e(h)$ is given by
\begin{align}
	\max_{h}~ e(h)=\sqrt{\frac{3m(m+1)} {{2(2m+1)}}}\int_{0}^{1}(L'(u))^2~du.
\end{align}
\end{lemma}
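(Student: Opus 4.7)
The approach is to show that the efficacy $e(h)$ depends on $h$ only through the projection $g_h(y):=\mathbb{E}[h(\zeta_{1,m},y)]$, thereby reducing the problem to the first-order maximization solved by Kuo and Rao (1981), and then to evaluate the resulting maximum explicitly at $h(x,y)=(x-y)^2$.

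Substituting the four identities of Lemma~\ref{lemma3} into (\ref{efficacy}), the factor of $2$ coming from the numerator cancels a factor of $2$ in the denominator, and $e(h)$ coincides exactly with the Kuo-Rao efficacy $e(g_h)$ displayed earlier in Section~4 for the first-order statistic $V_{m,n}(g_h)$. Since every measurable function $g$ arises as $g_h$ for the symmetric kernel $h(x,y)=g(x)+g(y)$, we have $\sup_h e(h)=\sup_g e(g)$, and the right-hand side is attained at $g(t)=t^2$ by Kuo and Rao (1981).

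To show that $h(x,y)=(x-y)^2$ realizes this supremum, observe that $g_h(y)=\mathbb{E}[(\zeta_{1,m}-y)^2]=y^2-2my+m(m+1)$ differs from $t^2$ only by an affine function. It therefore suffices to verify that $e(g)$ is invariant under $g\mapsto g+at+b$. The constant $b$ drops out of all covariances. For the linear part, the numerator is invariant because $\mathrm{cov}(\zeta_{1,m},(\zeta_{1,m}-m-1)^2)=0$, a one-line check using the Gamma moments $\mathbb{E}[\zeta^k]=m(m+1)\cdots(m+k-1)$. The denominator is invariant thanks to the identity
\[\sum_{i=1}^{2m}\mathrm{cov}(g(\zeta_{m,m}),\zeta_{i,m})=m\,\mathrm{cov}(g(\zeta_{1,m}),\zeta_{1,m}),\]
which I prove by writing $\sum_{i=1}^{2m}\zeta_{i,m}=\sum_k c_k Z_k$: the coefficient $c_k$ equals $m$ for each $Z_k$ in the window $\{Z_m,\ldots,Z_{2m-1}\}$ of $\zeta_{m,m}$, while the remaining $Z_k$ are independent of $\zeta_{m,m}$. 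The same argument with $g(x)=x$ gives $\sum_i\mathrm{cov}(\zeta_{m,m},\zeta_{i,m})=m^2$, which cancels the quadratic-in-$a$ contribution from $B(g+at)^2$.

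For the explicit maximum I compute each piece at $h(x,y)=(x-y)^2$ using the independent pair $\xi_1=\zeta_{1,m},\xi_2=\zeta_{m+1,m}\sim\mathrm{Gamma}(m,1)$. The numerator equals $2\,\mathrm{cov}(\xi_1^2,(\xi_1-m-1)^2)=4m(m+1)$, and $B=\mathrm{cov}((\xi_1-\xi_2)^2,\xi_1)=2m$. For $A$, conditioning on the independent outer variables $\zeta_{4m,m},\zeta_{6m,m}$ reduces $\mathrm{cov}((\zeta_{m,m}-\zeta_{4m,m})^2,(\zeta_{i,m}-\zeta_{6m,m})^2)$ to $\mathrm{cov}((\zeta_{m,m}-m)^2,(\zeta_{i,m}-m)^2)$. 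Writing $\zeta_{k,m}-m=\sum_{\ell=0}^{m-1}Y_{k+\ell}$ with $Y=Z-1$ (so $\kappa_2(Y)=1,\kappa_4(Y)=6$) and expanding in cumulants yields the lag-$s$ covariance $C(s)=2(m-s)(m-s+3)$ for $0\le s\le m-1$, vanishing for $s\ge m$. Summing gives $A=2m(2m^2+9m+1)/3$ and hence $A-B^2=2m(m+1)(2m+1)/3$, so
\[e(h)=\frac{2m(m+1)\int_0^1(L'(u))^2\,du}{2\sqrt{2m(m+1)(2m+1)/3}}=\sqrt{\frac{3m(m+1)}{2(2m+1)}}\int_0^1(L'(u))^2\,du.\]
The main obstacle is the $A$-computation with its $(m-1)$-dependence bookkeeping: once the cumulant formula for $C(s)$ is in hand, the rest reduces to elementary arithmetic sums, and the affine-invariance step is a short but essential consistency check that $(x-y)^2$ truly sits at the maximum.
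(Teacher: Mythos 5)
Your proposal is correct, and all the numerical checks go through: the projection of $(x-y)^2$ is $g_h(y)=y^2-2my+m(m+1)$, the numerator covariance is $4m(m+1)$, $B=2m$, the lag covariance $C(s)=2(m-s)(m-s+3)$ sums to $A=2m(2m^2+9m+1)/3$, and $A-B^2=2m(m+1)(2m+1)/3$ reproduces both the stated maximum and the variance $8m(m+1)(2m+1)/3$ appearing in Theorem~3. However, your route is genuinely different from the paper's. The paper argues directly on the functional $e(h)$: it invokes invariance under linear transformations of $h$, applies the Cauchy--Schwarz inequality to the numerator $\mathrm{cov}\bigl(h(\zeta_{1,m},\zeta_{m+1,m}),(\zeta_{1,m}-m-1)^2+(\zeta_{m+1,m}-m-1)^2\bigr)$ to identify the equality class $h(x,y)=a[(x-m-1)^2+(y-m-1)^2]+b$, and then notes that $\mathrm{cov}\bigl(\zeta_{1,m}\zeta_{m+1,m},(\zeta_{1,m}-m-1)^2+(\zeta_{m+1,m}-m-1)^2\bigr)=0$ so that $(x-y)^2$ lies in the same maximizing class. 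You instead use the identities of Lemma~3 to collapse $e(h)$ onto the Kuo--Rao first-order efficacy $e(g_h)$, observe that every $g$ is realized (up to a constant) by $h(x,y)=g(x)+g(y)$, import the known optimum $g(t)=t^2$, and then verify affine-invariance of $e(g)$ to place $(x-y)^2$ at the maximum. Your reduction buys rigor: the paper's Cauchy--Schwarz step bounds the numerator by $\sqrt{\mathrm{var}(h(\zeta_{1,m},\zeta_{m+1,m}))}$, which is not the denominator of $e(h)$ (the denominator is $\sqrt{A-B^2}$ with the $(m-1)$-dependent covariance sum), so the paper's optimality claim really rests on an unstated normalization; your argument sidesteps this by outsourcing the optimization to the first-order problem, at the cost of depending on the external Kuo--Rao result and on Lemma~3. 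Your direct cumulant computation of $A$ is also more explicit than anything in the paper, which simply asserts the final value.
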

\begin{proof}
Since efficacy is not affected by linear transformations, it is sufficient to find $h$ for which the numerator of (\ref{efficacy}) is maximum. By the Cauchy-Schwarz inequality
\begin{align}
	&cov\pmb(h(\zeta_{1,m},\zeta_{m+1,m}),(\zeta_{1,m}-m-1)^2+(\zeta_{m+1,m}-m-1)^2\pmb)\nonumber\\
	\leq &\sqrt{var\left(h(\zeta_{1,m},\zeta_{m+1,m})\right)} \sqrt{var\left((\zeta_{1,m}-m-1)^2+(\zeta_{m+1,m}-m-1)^2\right)},
\end{align}
equality is attained if and only if $h(x,y)=a[(x-m-1)^2+(y-m-1)^2]+b$, with $a\in\mathbb{R}\setminus\{0\}$ and $b\in\mathbb{R}$.	

Observe that $cov[\zeta_{1,m}\zeta_{m+1,m},(\zeta_{1,m}-m-1)^2+(\zeta_{m+1,m}-m-1)^2]=0$, and hence $e(h)$ is maximum when $h(x,y)=(x-y)^2$.

Thus,
\begin{align*}
	\max e(h)=&\frac{var\left((\zeta_{1,m}-m-1)^2+(\zeta_{m+1,m}-m-1)^2\right)\int_{0}^{1}(L'(u))^2~du} {4\sqrt{2m(m+1)(2m+1)/3}}\\
	=&\sqrt{\frac{3m(m+1)} {{2(2m+1)}}}\int_{0}^{1}(L'(u))^2~du.
\end{align*}
This concludes the proof.
\end{proof}
Combining Theorem \ref{thm2} and Lemma \ref{lemma8}, we have the following result.
\begin{thm}\label{thm3}
Among the tests based on second order U-statistic of the overlapping $m$-spacings, for the null hypothesis against the sequence of local alternatives (\ref{fn_alt}), the ALMP test at $\alpha$ level of significance, is that which rejects the null hypothesis when
\begin{align}
	\sum_{1\leq i<j\leq n-m+1}h\left(nD_{i,n}^{(m)},nD_{j,n}^{(m)}\right)> C(\alpha),~0<\alpha<1,
\end{align}
where $C(\alpha)$ is the critical value. The asymptotic distribution of this ALMP tests statistic under the sequence of local alternatives (\ref{fn_alt}) is given by
\begin{align}
	&\sqrt{n}\left(\begin{pmatrix}
		n-m+1\\2\\
	\end{pmatrix}^{-1}\sum_{1\leq i<j\leq n-m+1}\left(nD_{i,n}^{(m)}-nD_{j,n}^{(m)}\right)^2-2m\right)\nonumber\\
	&\xrightarrow{d}N\left(2m(m+1)\int_{0}^{1}(L'(u))^2~du,\frac{8m(m+1)(2m+1)}{3}\right).
\end{align}	
Under the null hypothesis, the asymptotic distribution can be obtained by putting $L'(u)=0$ in the above.
\end{thm}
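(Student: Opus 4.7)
The plan is to derive Theorem \ref{thm3} as a direct specialization of Theorem \ref{thm2} to the kernel identified by Lemma \ref{lemma8}, with the substantive effort going into evaluating the three constants $\theta$, $\mu_h$, and $\sigma_h^2$ in closed form when $h(x,y) = (x-y)^2$.

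First I would invoke Lemma \ref{lemma8}: among all admissible kernels, the efficacy $e(h) = \mu_h/\sigma_h$ is maximized by $h(x,y) = (x-y)^2$ (equivalently, by any affine image of $(x-m-1)^2 + (y-m-1)^2$). Since $\mu_h > 0$ for this choice under a non-trivial alternative (the integral $\int_0^1 (L'(u))^2\,du$ is positive and the relevant covariance is positive), the one-sided test which rejects when $\sum_{i<j}(nD_{i,n}^{(m)} - nD_{j,n}^{(m)})^2 > C(\alpha)$ is the ALMP test at level $\alpha$, proving the first part of the theorem. The remaining limiting distribution statement is an immediate application of Theorem \ref{thm2} once the three constants are identified.

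The centering $\theta = \mathbb{E}[(\zeta_{1,m} - \zeta_{m+1,m})^2]$ equals $2\,\mathrm{Var}(\zeta_{1,m}) = 2m$, since $\zeta_{1,m}$ and $\zeta_{m+1,m}$ are independent gamma$(m,1)$ random variables. For the drift, I would decompose $(x-y)^2 = (x-m-1)^2 + (y-m-1)^2 - 2(x-m-1)(y-m-1)$ and invoke the observation from the proof of Lemma \ref{lemma8} that $\mathrm{cov}(\zeta_{1,m}\zeta_{m+1,m},\,(\zeta_{1,m}-m-1)^2+(\zeta_{m+1,m}-m-1)^2) = 0$, so the cross term drops out. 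This reduces $\mu_h$ to $\tfrac12\,\mathrm{Var}\bigl((\zeta_{1,m}-m-1)^2 + (\zeta_{m+1,m}-m-1)^2\bigr)\int_0^1 (L'(u))^2\,du$, which a direct gamma central-moment computation evaluates to $2m(m+1)\int_0^1 (L'(u))^2\,du$.

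For $\sigma_h^2 = 4(A - B^2)$, the term $B = \mathrm{cov}((\zeta_{1,m}-\zeta_{m+1,m})^2,\zeta_{1,m})$ collapses by independence to $\mathrm{cov}(\zeta_{1,m}^2,\zeta_{1,m}) - 2m\,\mathrm{Var}(\zeta_{1,m}) = 2m$ using the gamma moments. For $A$, Lemma \ref{lemma3} reduces it to $\sum_{j=1}^{2m}\mathrm{cov}(g(\zeta_{m,m}),g(\zeta_{j,m}))$ with $g(x) = \mathbb{E}[(\zeta_{1,m}-x)^2] = (x-m)^2 + m$, and $(m-1)$-dependence plus stationarity collapses this further to $\mathrm{Var}((\zeta_{1,m}-m)^2) + 2\sum_{j=2}^{m}\mathrm{cov}((\zeta_{1,m}-m)^2,(\zeta_{j,m}-m)^2)$. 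The main technical obstacle I expect is the closed form of this lagged covariance: I would write $\zeta_{1,m} - m = a + b$ and $\zeta_{j,m} - m = b + c$ where $a,b,c$ are independent centered partial sums over the two disjoint tails and the overlap block (with variances $j-1$, $m-j+1$, $j-1$ respectively), expand the squares, and use independence to kill every odd power, obtaining a polynomial in $j$. Summing over $j=2,\ldots,m$ and combining with $\mathrm{Var}((\zeta_{1,m}-m)^2) = 2m^2+6m$ should consolidate to $A - B^2 = \tfrac{2m(m+1)(2m+1)}{3}$, yielding $\sigma_h^2 = \tfrac{8m(m+1)(2m+1)}{3}$ as claimed. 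The null-hypothesis statement in the last sentence of the theorem is then just the $L' \equiv 0$ specialization of this limit law.
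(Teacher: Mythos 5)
Your proposal is correct and follows the same route as the paper, which proves Theorem \ref{thm3} simply by combining Theorem \ref{thm2} with Lemma \ref{lemma8} and specializing the constants to $h(x,y)=(x-y)^2$; your explicit evaluations ($\theta=2m$, $\mu_h=2m(m+1)\int_0^1(L'(u))^2\,du$, $B=2m$, and $A-B^2=\tfrac{2m(m+1)(2m+1)}{3}$ via the overlap decomposition $\zeta_{1,m}-m=a+b$, $\zeta_{j,m}-m=b+c$, which gives $\mathrm{cov}\bigl((a+b)^2,(b+c)^2\bigr)=\mathrm{Var}(b^2)=2s^2+6s$ for overlap size $s$) all check out and supply the moment computations the paper leaves implicit.
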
 

Thus under the sequence of local alternatives (\ref{fn_alt}), the test based on Gini's mean squared difference of overlapping $m$-spacings is ALMP among the tests based on second order U statistic of overlapping $m$-spacings. The Gini's mean squared difference test statistic is given by
\begin{align}
G_{m,n}(2)=\frac{2}{N(N-1)}\sum_{1\leq i<j\leq n-m+1}\left|nD_{i,n}^{(m)}-nD_{j,n}^{(m)}\right|^2.
\end{align}
Also this test is ALMP among tests based on generalized Gini's mean difference statistics of overlapping $m$-spacings. Using Lemma \ref{lemma8}, efficacy of the Gini's mean squared difference test is given by
\begin{align}\label{eff_Gini's}
e^2(h)=\frac{3m(m+1)} {{2(2m+1)}}\left(\int_{0}^{1}(L'(u))^2~du\right)^2.
\end{align}
The Gini's mean squared difference test based on simple (or disjoint $m$-) spacings is ALMP among second order U-statistics tests  based on simple (or disjoint $m$-) spacings. Further, the Gini's mean squared difference test based on simple (or disjoint $m$-) spacings has the same efficacy as the Greenwood test based on simple (or disjoint $m$-) spacings (\cite{tung_2012a,tung_2012b}). They also found that as $m$ increases, the efficacy of the test increases. Theorem \ref{thm3} conveys that among second order U-statistic tests based on overlapping $m$-spacings, Gini's mean squared difference test is ALMP. Moreover, as expected, this tests efficacy is same as the Greenwood test based on overlapping $m$-spacings. 

For different values of $m$, \cite{rao_1984} studied the efficacies of Greenwood tests based on disjoint and overlapping $m$-spacings. They found that the test based on overlapping $m$-spacings is superior. It follows the Gini's mean squared difference tests based on overlapping $m$-spacings is superior to that based on disjoint $m$-spacings in terms of the Pitman ARE. A table of efficacies can be found in Rao \& Kuo (1984). \cite{misra_2001} compared Greenwood and Kullback-Leibler tests based on overlapping $m$-spacings. They found that the Greenwood test is superior in terms of the Pitman ARE. Thus, the Gini's mean squared difference tests based on overlapping $m$-spacings is superior to the Kullback-Leibler tests based on overlapping $m$-spacings.\\~\\
\textbf{Remark:}
GoF tests based on spacings is a class of tests which can be directly applied to real as well as circular data. \cite{rao_2004} discussed Gini's mean difference test based on simple spacings for uniformity of circular data. Further \cite{tung2013} extended the study to generalized Gini's mean difference tests. They showed that to  Gini's mean square difference test is ALMP in terms of the Pitman ARE for alternatives of type (\ref{fn_alt}). This is equivalent to the Greenwood test. Using the results established in previous Sections 2-4, these results hold for the case of overlapping spacings as well. That is, in terms of the Pitman ARE, the  Gini's mean square difference test is ALMP among among generalized Gini's mean difference tests based on overlapping $m$-spacings, for the alternatives of type (\ref{fn_alt}).

\section{Simulation study}
In this section, we report finite sample performance of some generalized Gini's mean difference tests based overlapping $m$-spacings. Though the Greenwood test is ALMP among the tests based on simple spacings, \cite{rao_2004} found that Gini's mean difference test is superior to the Greenwood test based on simple spacings, for finite sample cases.

For the simulation study, we consider generalized Gini's mean difference tests based on disjoint and overlapping spacings corresponding to $r=1,~1.5$ and $2$. We take $U(0,1)$ as the null distribution and level of significance to be $0.05$. $Beta(0.5,0.5)$, $Beta(3,3)$ and $Beta(1,3)$ alternatives are considered. $Beta(0.5,0.5)$ has heavier tail, $Beta(3,3)$ has lighter tail as compared to $U(0,1)$ and $Beta(1,3)$ is skewed.
\begin{table}[h]
\centering
\caption{Empirical powers for the alternative $Beta(0.5,0.5)$ and $n=50$.}
\begin{tabular}{lllllll}
	\hline
	& \multicolumn{3}{l}{Disjoint} & \multicolumn{3}{l}{Overlapping} \\ \cline{2-7} 
	$m$ & $r=1$   & $r=1.5$  & $r=2$   & $r=1$    & $r=1.5$   & $r=2$    \\ \hline
	1   & 0.6148  & 0.5086   & 0.4277  & 0.6148   & 0.5086    & 0.4277   \\
	2   & 0.6701  & 0.6075   & 0.5192  & 0.7237   & 0.6671    & 0.5941   \\
	4   & 0.7073  & 0.6626   & 0.6162  & 0.7797   & 0.7313    & 0.7093   \\
	5   & 0.6349  & 0.6158   & 0.5575  & 0.7711   & 0.7483    & 0.7274   \\
	10  & 0.5652  & 0.5591   & 0.5483  & 0.7129   & 0.7013    & 0.6909   \\ \hline
\end{tabular}
\end{table}
\begin{table}[h]
\centering
\caption{Empirical powers for the alternative $Beta(3,3)$ and $n=50$.}
\begin{tabular}{lllllll}
	\hline
	& \multicolumn{3}{l}{Disjoint} & \multicolumn{3}{l}{Overlapping} \\ \cline{2-7} 
	$m$ & $r=1$   & $r=1.5$  & $r=2$   & $r=1$    & $r=1.5$   & $r=2$    \\ \hline
	1   & 0.7631  & 0.8553   & 0.8467  & 0.7631   & 0.8553    & 0.8467   \\
	2   & 0.2773  & 0.4677   & 0.5658  & 0.2906   & 0.5555    & 0.6759   \\
	4   & 0.2982  & 0.5016   & 0.5931  & 0.1434   & 0.1701    & 0.3229   \\
	5   & 0.0052  & 0.0078   & 0.0147  & 0.0792   & 0.1165    & 0.19     \\
	10  & 0.0115  & 0.0172   & 0.0159  & 0.0247   & 0.0368    & 0.0591   \\ \hline
\end{tabular}
\end{table}
\begin{table}[h]
\centering
\caption{Empirical powers for the alternative $Beta(1,3)$ and $n=50$.}
\begin{tabular}{lllllll}
	\hline
	& \multicolumn{3}{l}{Disjoint} & \multicolumn{3}{l}{Overlapping} \\ \cline{2-7} 
	$m$ & $r=1$   & $r=1.5$  & $r=2$   & $r=1$    & $r=1.5$   & $r=2$    \\ \hline
	1   & 0.9946  & 0.9978   & 0.9977  & 0.9946   & 0.9978    & 0.9977   \\
	2   & 0.9991  & 0.9995   & 0.9999  & 0.8869   & 0.9963    & 0.9984   \\
	4   & 0.9999  & 0.9999   & 0.9999  & 0.6724   & 0.9578    & 0.9887   \\
	5   & 0.2599  & 0.3245   & 0.3426  & 0.6363   & 0.9143    & 0.9801   \\
	10  & 0.4539  & 0.4572   & 0.4935  & 0.4827   & 0.7253    & 0.9116   \\ \hline
\end{tabular}
\end{table}
Gini's mean difference test based on overlapping spacings is superior to all the other tests considered for heavy tailed alternative ($Beta(0.5,0.5)$). For other two alternatives the Greenwood tests is superior. The optimal value of $m$ increases as $n$ increases, but for moderate sample sizes optimal value of $m$ is $4$ or $5$. 
\FloatBarrier
\section{Conclusion}
In this article, we have derived asymptotic distribution of second order U-statistic based on overlapping $m$-spacings, under the null hypothesis and a sequence of local alternatives. It has been found that asymptotically locally most powerful test among the class of tests based on second order U-statistics of overlapping $m$-spacings, is the Gini's mean square difference test, which is algebraically equivalent to the Greenwood test based on overlapping $m$-spacings. Simulation study reveals that for heavy tailed alternatives Gini's mean difference test is superior to the Greenwood test based on overlapping spacings. For light tailed and skewed alternatives, the Greenwood test is superior. Optimal choice of $m$ is definitely an interesting question to be answered. For moderate sample sizes optimal value of $m$ is $4$ or $5$. 

\bibliography{mybibfile}

\begin{thebibliography}{}

\bibitem[Del~Pino, 1979]{pino_1979}
Del~Pino, G.~E. (1979).
\newblock On the asymptotic distribution of {$k$}-spacings with applications to
  goodness-of-fit tests.
\newblock {\em Ann. Statist.}, 7(5):1058--1065.

\bibitem[Greenwood, 1946]{greenwood_1946}
Greenwood, M. (1946).
\newblock The statistical study of infectious diseases.
\newblock {\em J. Roy. Statist. Soc. (N.S.)}, 109:85--103; discussion,
  103--110.

\bibitem[Holst, 1979]{holst_1979}
Holst, L. (1979).
\newblock Asymptotic normality of sum-functions of spacings.
\newblock {\em Ann. Probab.}, 7(6):1066--1072.

\bibitem[Jammalamadaka and Goria, 2004]{rao_2004}
Jammalamadaka, S.~R. and Goria, M.~N. (2004).
\newblock A test of goodness-of-fit based on {G}ini's index of spacings.
\newblock {\em Statist. Probab. Lett.}, 68(2):177--187.

\bibitem[Kuo and Rao, 1981]{kuo_1981}
Kuo, M. and Rao, J.~S. (1981).
\newblock Limit theory and efficiencies for tests based on higher-order
  spacings.
\newblock In {\em Statistics: applications and new directions ({C}alcutta,
  1981)}, pages 333--352. Indian Statist. Inst., Calcutta.

\bibitem[Misra and van~der Meulen, 2001]{misra_2001}
Misra, N. and van~der Meulen, E.~C. (2001).
\newblock A new test of uniformity based on overlapping sample spacings.
\newblock {\em Comm. Statist. Theory Methods}, 30(7):1435--1470.

\bibitem[Rao and Kuo, 1984]{rao_1984}
Rao, J.~S. and Kuo, M. (1984).
\newblock Asymptotic results on the {G}reenwood statistic and some of its
  generalizations.
\newblock {\em J. Roy. Statist. Soc. Ser. B}, 46(2):228--237.

\bibitem[Rao and Sethuraman, 1975]{rao_1975}
Rao, J.~S. and Sethuraman, J. (1975).
\newblock Weak convergence of empirical distribution functions of random
  variables subject to perturbations and scale factors.
\newblock {\em Ann. Statist.}, 3:299--313.

\bibitem[Rao and Sobel, 1980]{rao_1980}
Rao, J.~S. and Sobel, M. (1980).
\newblock Incomplete {D}irichlet integrals with applications to ordered uniform
  spacings.
\newblock {\em J. Multivariate Anal.}, 10(4):603--610.

\bibitem[Sethuraman and Rao, 1970]{rao_1970}
Sethuraman, J. and Rao, J.~S. (1970).
\newblock Pitman efficiencies of tests based on spacings.
\newblock In {\em Nonparametric {T}echniques in {S}tatistical {I}nference
  ({P}roc. {S}ympos., {I}ndiana {U}niv., {B}loomington, {I}nd., 1969)}, pages
  405--415. Cambridge Univ. Press, London.

\bibitem[Tung and Jammalamadaka, 012b]{tung_2012b}
Tung, D.~D. and Jammalamadaka, S.~R. (2012b).
\newblock {$U$}-statistics based on higher-order spacings.
\newblock In {\em Nonparametric statistical methods and related topics}, pages
  151--169. World Sci. Publ., Hackensack, NJ.

\bibitem[Tung and Jammalamadaka, 2013]{tung2013}
Tung, D.~D. and Jammalamadaka, S.~R. (2013).
\newblock On the {G}ini mean difference test for circular data.
\newblock {\em Comm. Statist. Theory Methods}, 42(11):1998--2008.

\bibitem[Tung and Rao~Jammalamadaka, 012a]{tung_2012a}
Tung, D.~D. and Rao~Jammalamadaka, S. (2012a).
\newblock {$U$}-statistics based on spacings.
\newblock {\em J. Statist. Plann. Inference}, 142(3):673--684.

\bibitem[Wang, 1999]{wang_1999}
Wang, Q. (1999).
\newblock On {B}erry-{E}sseen rates for {$m$}-dependent {$U$}-statistics.
\newblock {\em Statist. Probab. Lett.}, 41(2):123--130.

\end{thebibliography}

\end{document}